\def\@maketitle{%
  \newpage
  %% Submission date (at the bottom)
%   \ifsubmissiondate
%     \thanks{%
%       \hskip0.8cm Entrato in redazione: \arabic{LM@day} \itmonth{LM@month} \arabic{LM@year}\smallskip}%
%   \fi
  %% MSC, if present
  \ifx\@MSC\empty\else\thanks{%
  \textit{AMS 2010 Subject Classification:} \@MSC
  }\fi%
  %% Keywords, if present
  \ifx\@keywords\empty\else\thanks{%
  \textit{Keywords:} \@keywords
  }\fi%
  %\ifx\LM@Doi\empty
  %  \vskip 4.5truecm%
  %\else
  %  \vskip 4.12truecm%
  %\fi
  \begin{center}%
  \let \footnote \thanks
    {\def\\{\protect\linebreak}%
    \fontsize{12}{15}\selectfont\bfseries \MakeUppercase{\@title}
    \ifx\sub@title\empty\else\vskip5pt\fontsize{10}{14}\selectfont\bfseries \MakeUppercase{\sub@title}\fi
    %% Title note, if present
    \the\@titlenote
    \par}%
    \if@author
    \vskip 0.5truecm%
    {\fontsize{11.5}{14}\selectfont
      \begin{tabular}[t]{c}%
        \let\authorline@sep\tabularnewline
        \ifshortauthors\@shortauthor\else\@author\fi
      \end{tabular}\par
    }%
    \fi
\end{center}%
\par
\vskip 0.5truecm}
\title{Computing a minimal resolution over the Steenrod algebra}
\author{Christian Nassau}
\address{%
Jenaer Weg 31\\
65931 Frankfurt\\
Germany\\
\email{nassau@nullhomotopie.de}%
}
\date{2019/10/04}
\keywords{Steenrod algebra, resolution, algorithm}
\pgfplotsset{width=8cm,compat=1.14}
\DeclareMathOperator{\Ext}{Ext}
\DeclareMathOperator{\Tor}{Tor}
\DeclareMathOperator{\im}{im}
\DeclareMathOperator{\Sq}{Sq}
\DeclareMathOperator{\sig}{sig}
\DeclareMathOperator{\xmod}{\,mod\,}
\newcommand{\Sig}{{\mathcal S}}
\newcommand{\PP}{{\mathcal P}}
\newcommand{\lefthopfquot}{\mathop\backslash\!\!\!\mathop\backslash}
\newcommand{\FF}{{\mathbb F}}
\newcommand{\NN}{{\mathbb N}}
\newcommand{\us}{_\ast}
\newcommand{\cnsetof}[2]{\left\{#1\,\mid\,#2\right\}}
\begin{document}

\maketitle

\begin{abstract}
We describe an algorithm that allows to compute a minimal 
resolution of the Steenrod algebra. 
The algorithm has built-in knowledge 
about vanishing lines for the cohomology of sub Hopf algebras
of the Steenrod algebra which makes it both faster and more economical
than the generic approach.
\end{abstract}

\section{Introduction}
Let $A$ denote the Steenrod algebra at a prime $p$ and let $k=\FF_p$.
The cohomology of $A$ is, by definition, the $\Ext$ group $\Ext_A(k,k)$.
It features prominently in algebraic topology as the $E_2$ term of the
Adams spectral sequence for the computation of the stable homotopy groups
of the sphere (see \cite{MR860042}).  

Machine computations of the cohomology of $A$ have a long history
%(see bruner, tangora, wellington)
and there is considerable
current activity in the field.
%(isaksen \& co, hood \& co).
While there are other legitimate approaches
(e.g.~the May spectral sequence \cite[Ch.~3.2]{MR860042}
or the Lambda algebra \cite{MR818916})
the most promising
route for purely mechanical computations 
seems to be the computation of the minimal resolution
of the ground field $k$ as pioneered by Bruner \cite{MR1224908}.
The main obstacle here is the enormous size of the resolution:
a computation for $p=2$ up to topological dimension $200$, 
for example, will require the computation
of kernels and cokernels of matrices over $k$ with hundreds of thousands
of rows and columns
(see \cite[Abb.~2.14]{zbMATH02190992} for a chart showing the growth rate of the resolution). 
Carrying out such computations in a reasonable time
seems well beyond the capabilities
of current computing technology.

The author's contribution to this story is the discovery
of a powerful shortcut based on vanishing lines for the cohomology
of subalgebras of $A$.
The author lectured about these results in
Oberwolfach in 1997
and these shortcuts became the basis
of his PhD dissertation \cite{zbMATH02190992}.
For reasons long lost in time an English language account
of these results has never been published.
This note is meant to remedy that ommission.

We have chosen not to give detailed proofs 
of the main theorems 
since this would require the introduction of cumbersome notation 
that would obscure the simple idea behind our approach. 
The mathematics involved is completely elementary
and a reader who works out the examples that we give
in Lemmas \ref{lem:sq1a}, \ref{lem:sq1b} and \ref{lem:exa1}
will have no problems filling in the details in the more general cases.

\section{The algorithm}\label{sec:algbasic}

We will assume $p=2$ throughout to simplify the exposition.
All results generalize to odd primes in a straightforward way
(for details see \cite{zbMATH02190992} or \cite{steenrodlib}).

We let $C\us$ denote the minimal resolution that we wish to compute. 
Every $C_s$ is a free $A$-module with a chosen set of generators $G_s\subset C_s$.
The differential $d:C_s\rightarrow C_{s-1}$ is described by keeping a list of the
$d(g_k)$ for $g_k \in G_s$.

One works by double induction on the internal degree $t$ 
of the Steenrod algebra and the homological degree $s$ of the resolution.
At each step $C\us$ is a partially complete resolution
below some bidegree $(s,t)$:
one has $H(C\us)_{p,q} = 0$ if $p<s$ or $q<t$,
but in $(s,t)$ itself the homology $H(C\us)_{s,t}$ might be non-zero.
If it is non-zero we introduce new generators in $C_{s+1}$ that kill the offending
homology classes. 
We can describe this procedure more formally as follows:

\begin{center}
    \begin{algorithm}[H]\label{alg:naive}
        \SetAlgoLined
        \DontPrintSemicolon
        \KwIn{A partial resolution below bidegree $(s,t)$}
        \KwResult{An extension of the resolution to $(s,t)$}
        $M \longleftarrow$ compute matrix of $d:C_{s,t}\rightarrow C_{s-1,t}$\;
        $K \longleftarrow$ basis of kernel of $M$\;
        $N \longleftarrow$ compute matrix of $d:C_{s+1,t}\rightarrow C_{s,t}$\;
        $Q \longleftarrow$ basis of quotient $K/\im N$, i.e.~of $H(C\us)_{s,t}$\;
        \For{$q\in Q$}{
            $x \longleftarrow$ pick a representative of $q$\;
            Introduce new generator $g\in C_{s+1,t}$ with $dg = x$.
        }
        \caption{The naive algorithm}
    \end{algorithm}
\end{center}

This is the basic algorithm as it applies to any connected, graded algebra.
To see how this can be improved given more specific knowledge about the algebra, 
consider the short exact sequence
\begin{equation}\label{sqex1}
    \begin{tikzcd}
        \Sq^1 C\us \arrow[r, "\text{incl.}", hook] & C\us \arrow[r, "\Sq^1\cdot", two heads] & \Sq^1 C\us
    \end{tikzcd}
\end{equation}
\begin{lemma}\label{lem:sq1a}
    If $C\us$ is a partially complete resolution below $(s,t)$ and if $t-s>1$
    the map $H\left(C\us\right)_{s,t} \rightarrow
    H\left(\Sq^1 C\us\right)_{s,t+1}$ is an isomorphism.
\end{lemma}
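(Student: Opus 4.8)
The plan is to feed the short exact sequence \eqref{sqex1} into its long exact homology sequence and read the isomorphism off from the vanishing of two neighbouring groups; that vanishing is governed by the vanishing line of the cohomology of the exterior sub-Hopf-algebra $E=\FF_2[\Sq^1]/(\Sq^1)^2\subset A$. First I would check that \eqref{sqex1} really is a short exact sequence of chain complexes: the differential $d$ is $A$-linear, hence commutes with left multiplication by $\Sq^1$, so $\Sq^1 C_\ast$ is a subcomplex and $\Sq^1\cdot$ a chain map; exactness reduces to $\ker(\Sq^1\cdot)=\im(\Sq^1\cdot)$ on each $C_s$, which holds because $C_s$ is $A$-free and $A$ is free over $E$ by the Milnor--Moore theorem, so $C_s$ is free over $E\cong\FF_2[x]/(x^2)$, where indeed $\ker x=\im x=x\cdot(-)$.

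Taking homology and remembering that $\Sq^1\cdot$ raises the internal degree by one, the long exact sequence contains the segment
\[
H(\Sq^1 C_\ast)_{s,t}\xrightarrow{\,\iota_\ast\,} H(C_\ast)_{s,t}\xrightarrow{\,\Sq^1\,} H(\Sq^1 C_\ast)_{s,t+1}\xrightarrow{\,\partial\,} H(\Sq^1 C_\ast)_{s-1,t},
\]
where $\iota_\ast$ is induced by the inclusion. Hence it suffices to prove $H(\Sq^1 C_\ast)_{s,t}=0$, which forces $\Sq^1$ to be injective, and $H(\Sq^1 C_\ast)_{s-1,t}=0$, which forces it to be surjective. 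I would deduce both from the claim that $H(\Sq^1 C_\ast)_{p,q}=0$ whenever $q\le t$ and $q\neq p+1$. Conceptually, $\Sq^1\cdot$ identifies $\Sq^1 C_\ast$ with $C_\ast/\Sq^1 C_\ast=C_\ast\otimes_E\FF_2$ up to a shift of one in the internal degree, so in the range where $C_\ast$ is exact the complex $\Sq^1 C_\ast$ computes $\Tor^E(\FF_2,\FF_2)$, whose only nonzero groups lie in internal degree equal to homological degree — this is the vanishing line $\Ext_E^{s,t}=0$ for $t\neq s$ — which after the shift becomes the line $q=p+1$. For an honest proof I would instead descend along the long exact sequence: the segment $H(C_\ast)_{p,q-1}\to H(\Sq^1 C_\ast)_{p,q}\to H(\Sq^1 C_\ast)_{p-1,q-1}$ together with $H(C_\ast)_{p,q-1}=0$ (valid since $q-1<t$) gives an injection $H(\Sq^1 C_\ast)_{p,q}\hookrightarrow H(\Sq^1 C_\ast)_{p-1,q-1}$; iterating down to homological degree $0$ yields $H(\Sq^1 C_\ast)_{p,q}\hookrightarrow H(\Sq^1 C_\ast)_{0,q-p}$, and one finishes by inspecting the bottom: since $\im(d\colon C_1\to C_0)$ equals the augmentation ideal $\overline{A}$ in the relevant internal degrees, $\im(d\colon\Sq^1 C_1\to\Sq^1 C_0)=\Sq^1\overline{A}$, so $H(\Sq^1 C_\ast)_{0,\ast}=\Sq^1 A/\Sq^1\overline{A}=\FF_2$, concentrated in internal degree $1$.

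Finally I would apply the claim twice: with $(p,q)=(s,t)$ it gives $H(\Sq^1 C_\ast)_{s,t}=0$ because $q-p=t-s>1$, and with $(p,q)=(s-1,t)$ it gives $H(\Sq^1 C_\ast)_{s-1,t}=0$ because $q-p=t-s+1>2$; in both cases what matters is $q\neq p+1$. This is the only place the hypothesis $t-s>1$ is used: it is exactly what moves both relevant bidegrees off the exceptional line $q=p+1$ on which $\Tor^E$ is supported. The step I expect to require real care is the base case of the descent — identifying $H(\Sq^1 C_\ast)_{0,\ast}$ — together with keeping the internal-degree shift straight throughout the long exact sequence; the unavoidable copy of the ground field at the bottom of the resolution is precisely what creates the exceptional line, so isolating it is the crux.
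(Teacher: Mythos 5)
Your proof is correct and follows essentially the same route as the paper: the long exact sequence of \eqref{sqex1} reduces the claim to the vanishing of $H(\Sq^1C\us)_{s,t}$ and $H(\Sq^1C\us)_{s-1,t}$, which you justify by identifying $\Sq^1C\us$ (in the range where $C\us$ is exact) with a complex computing $\Ext_{A(0)}(k)$, supported on the line $q=p+1$ after the degree shift. Your downward induction along the long exact sequence, ending with the explicit computation $H(\Sq^1C\us)_{0,\ast}=\Sq^1A/\Sq^1\overline{A}$, is exactly the verification that the paper leaves to the reader with the phrase \enquote{one can check that it applies to the boundary terms}, and your bookkeeping of where $t-s>1$ enters is accurate.
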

\begin{proof}
The associated long exact sequence in homology %to \ref{sqex1}
contains
\begin{equation}\label{les1}
    \begin{tikzcd}[column sep=small]
        H\left(\Sq^1 C\us\right)_{s,t} \arrow[r] & 
        H\left(C\us\right)_{s,t} \arrow[r] & 
        H\left(\Sq^1 C\us\right)_{s,t+1} \arrow[r] & 
        H\left(\Sq^1 C\us\right)_{s-1,t} 
    \end{tikzcd}
\end{equation}
so it suffices to show that the left and right hand groups are zero.

Let $A(0)$ denote the exterior algebra $\FF_2\{1,\Sq^1\}$.
If $C\us$ was already a complete resolution 
one would have
$H\left(\Sq^1C\us\right)_{p,q} = \Ext^{p,q-1}_{A(0)}(k)$
since $\Sq^1C\us \cong k\otimes_{A(0)} C\us$
(up to a degree shift of $1$) 
and $C\us$ would function as an $A(0)$-resolution of $k$.
For a partial resolution that identification holds true through a range
and one can check that it applies to the boundary terms in (\ref{les1}).
Since $\Ext^{p,q}_{A(0)}=0$ for $q-p>0$ that proves the Lemma.
\end{proof}
The Lemma shows that for $t-s>1$ the computation of $Q$ in Algorithm \ref{alg:naive}
can be carried out in $\Sq^1C\us$ which is approximately only half as big as $C\us$.

This alone does not quite suffice for the completion of the inductive step, though: 
the algorithm needs a representative cycle $x$ from $C_{s,t}$, but
a computation of $H(\Sq^1C\us)_{s,t+1}$ will only produce a cycle $\Sq^1 x'$
in $\Sq^1 C\us$. 
Writing $x=x'+\Sq^1 x''$ we thus still need to determine the unknown component $x''$.
\begin{lemma}\label{lem:sq1b}
    If $t-s>0$ we can determine $\Sq^1 x''$ 
    by solving $d\left(\Sq^1x''\right)=-dx'$ in $\Sq^1C\us$.
\end{lemma}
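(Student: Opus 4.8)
The plan is to mimic the boundary-term computation from the proof of Lemma \ref{lem:sq1a}, lowered by one in homological degree.

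First I would unwind the hypothesis. Saying that the computation produced a \emph{cycle} $\Sq^1 x'$ in $\Sq^1 C\us$ means $0 = d(\Sq^1 x') = \Sq^1(dx')$, since $d$ is $A$-linear; exactness of (\ref{sqex1}) at the middle term then forces $dx' \in \Sq^1 C_{s-1,t}$, and $d(dx') = 0$ makes $dx'$ a cycle of the subcomplex $\Sq^1 C\us$ in bidegree $(s-1,t)$. Consequently, solving $d(\Sq^1 x'') = -dx'$ inside $\Sq^1 C\us$ is exactly the same as writing the cycle $dx'$ as a boundary there, i.e.\ as the vanishing of its class in $H(\Sq^1 C\us)_{s-1,t}$.

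Second, I would invoke the identification already used in Lemma \ref{lem:sq1a}. Because the partial resolution is complete below $(s,t)$ it is in particular exact in homological degree $s-1$, so through the relevant range it behaves as an $A(0)$-resolution of $k$ and $H(\Sq^1 C\us)_{s-1,t} \cong \Ext^{s-1,t-1}_{A(0)}(k)$. Since $\Ext^{p,q}_{A(0)}$ is concentrated on the line $q = p$ and here $(t-1) - (s-1) = t-s > 0$, that group is zero; hence $dx'$ is a boundary and the equation $d(\Sq^1 x'') = -dx'$ has a solution.

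Finally I would close the loop: for any such $x''$, put $x := x' + \Sq^1 x''$. Then $dx = dx' + d(\Sq^1 x'') = 0$, so $x$ is the desired representative cycle in $C_{s,t}$, and $\Sq^1 x = \Sq^1 x'$ because $(\Sq^1)^2 = 0$, so $x$ really does lift the cycle $\Sq^1 x'$ produced by the computation in $\Sq^1 C\us$. The only input beyond Lemma \ref{lem:sq1a} is the remark that solvability is controlled by $H(\Sq^1 C\us)_{s-1,t}$ rather than $H(\Sq^1 C\us)_{s,t}$, which is why the hypothesis relaxes from $t-s>1$ to $t-s>0$; I expect the one delicate point in a detailed write-up to be the same as in Lemma \ref{lem:sq1a}, namely checking that the $A(0)$-$\Ext$ identification survives for a merely partial resolution — but here the relevant bidegree $(s-1,t)$ lies strictly below $(s,t)$ in homological degree, where $C\us$ is genuinely exact, so nothing can go wrong.
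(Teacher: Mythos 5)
Your proposal is correct and follows the paper's own argument essentially verbatim: you observe that $dx'$ lands in $\left(\Sq^1 C\us\right)_{s-1,t}$ by exactness of (\ref{sqex1}), identify $H\left(\Sq^1 C\us\right)_{s-1,t}$ with $\Ext^{s-1,t-1}_{A(0)}(k)$, and use its vanishing for $t-s>0$ to solve the lifting problem. The closing check that $x=x'+\Sq^1x''$ is a cycle with $\Sq^1x=\Sq^1x'$ is a welcome addition the paper leaves implicit.
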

\begin{proof}
    Firstly, one has $\Sq^1 dx' = d\Sq^1x' = 0$, so $dx'$ lies in $\Sq^1C_{s-1,t-1} = \left(\Sq^1C\us\right)_{s-1,t}$
    by the exactness of (\ref{sqex1}).
    Arguing as in Lemma \ref{lem:sq1a} we find that $H\left(\Sq^1C\us\right)_{s-1,t}$
    computes $\Ext^{s-1,t-1}_{A(0)}(k)$. The assumption $t-s>0$ guarantees that this
    vanishes, so there is indeed a $\Sq^1x''\in \Sq^1C_{s-1,t-1}$ with boundary $-dx'$.
\end{proof}
Together Lemmas \ref{lem:sq1a} and \ref{lem:sq1b} show that for $t-s>1$ one can 
trade the single homology calculation in Algorithm \ref{alg:naive}
against one homology calculation in $\Sq^1 C\us$ and the solution of one lifting problem
in $\Sq^1 C\us$. 
In large dimensions this is a considerable improvement:
the matrix that represents the differential in $\Sq^1C\us$ will need roughly
just a quarter of the space that would be required to store the full 
differential; even though two such matrices are needed, they are needed sequentially,
so the same space can be reused; and the linear algebra routines
for the computation of kernel and quotient will run a lot faster since their running times
are typically more than quadratic in the size of the input matrices.

The real power of this trick, however, is that it can be iterated. Consider the
short exact sequences
\begin{equation}
\begin{tikzcd}[row sep=tiny]
    \Sq(1,1) C\us \arrow[r, "\text{incl.}", hook] & 
    \Sq^1 C\us \arrow[r, "{\cdot\Sq(0,1)}", two heads] & \Sq(1,1) C\us 
    \\
    \Sq(3,1) C\us \arrow[r, "\text{incl.}", hook] & 
    \Sq(1,1) C\us \arrow[r, "\cdot\Sq^2", two heads] & \Sq(3,1) C\us
\end{tikzcd}
\end{equation}
The homology of $\Sq(1,1)C\us$ and $\Sq(3,1)C\us$ is approaching $\Ext_{B}^{p,q}(k)$
where $B$ is, respectively, the exterior algebra $E$ on $\Sq^1$ and $\Sq(0,1)$
or the subalgebra $A(1)$
(as usual we let $A(n)\subset A$ denote the sub Hopf algebra spanned by 
$\Sq^1,\ldots,\Sq^{2^n}$).
These both vanish if $q>3p$ and there is the following
straightforward generalization of
Lemma \ref{lem:sq1a} and \ref{lem:sq1b}.
\begin{lemma}\label{lem:exa1}
    Let $C\us$ be a partially complete resolution below $(s,t)$ and assume $t>3(s+1)$.
    Then the map $C\us \rightarrow \Sq(3,1)C\us$ with $x\mapsto \Sq(3,1)x$
    induces an isomorphism
    $H\left(C\us\right)_{s,t} \cong H\left(\Sq(3,1)C\us\right)_{s,t+6}$.
    Furthermore, any cycle $\Sq(3,1)x_0\in \Sq(3,1)C_{s,t}$
    can be completed to a cycle $x=x_0+x_1+\cdots + x_7$ in $C_{s,t}$ by solving 
    $7$ subsequent lifting problems in $\Sq(3,1)C\us$.
\end{lemma}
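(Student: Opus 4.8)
The plan is to run the argument of Lemmas~\ref{lem:sq1a} and~\ref{lem:sq1b} three times in succession, once for each of the three short exact sequences displayed above. Together these exhibit $\Sq(3,1)C\us$ at the bottom of a three-step filtration
\[
  C\us \;\supset\; \Sq^1 C\us \;\supset\; \Sq(1,1)C\us \;\supset\; \Sq(3,1)C\us ,
\]
and, exactly as in Lemma~\ref{lem:sq1a} and through a range of bidegrees that is available because $C\us$ is complete below $(s,t)$, the three smaller complexes are $k\otimes_{A(0)}C\us$, $k\otimes_{E}C\us$, $k\otimes_{A(1)}C\us$ up to internal degree shifts equal to the degrees $1$, $4$, $6$ of $\Sq^1$, $\Sq(1,1)$, $\Sq(3,1)$. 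Consequently, in the bidegrees that will occur,
\[
  H(\Sq^1 C\us)_{p,q}\cong\Ext_{A(0)}^{p,q-1},\quad
  H(\Sq(1,1)C\us)_{p,q}\cong\Ext_{E}^{p,q-4},\quad
  H(\Sq(3,1)C\us)_{p,q}\cong\Ext_{A(1)}^{p,q-6},
\]
and each of these three $\Ext$ algebras vanishes above the line $q=3p$. One also checks that the composite of the three surjections is the map $x\mapsto\Sq(3,1)x$ from the statement.

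For the first assertion I would concatenate the three isomorphisms
\[
  H(C\us)_{s,t}\xrightarrow{\sim}H(\Sq^1 C\us)_{s,t+1}\xrightarrow{\sim}H(\Sq(1,1)C\us)_{s,t+4}\xrightarrow{\sim}H(\Sq(3,1)C\us)_{s,t+6},
\]
each produced from one of the three long exact sequences in homology exactly as~(\ref{les1}) produced the first. The $j$-th arrow is an isomorphism as soon as the two boundary terms flanking it vanish; under the identifications above these are $\Ext$ groups of $A(0)$, $E$, $A(1)$ in bidegrees $(s,\cdot)$ and $(s-1,\cdot)$ close to those on display, and a short computation shows they all lie strictly above the line $q=3p$ precisely when $t>3(s+1)$. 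The binding constraint is the middle sequence, whose left boundary term is $\Ext_E^{s,t-3}$ and which therefore needs $t-3>3s$; the two outer sequences only ask for $t>s+1$ and $t>3s+2$. This yields the claimed isomorphism $H(C\us)_{s,t}\cong H(\Sq(3,1)C\us)_{s,t+6}$.

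For the second assertion, note that $x_0$ is automatically a preimage of the cycle $\Sq(3,1)x_0$ under $x\mapsto\Sq(3,1)x$; it is not in general a cycle, but we can repair it by lifting $\Sq(3,1)x_0$ back up the filtration in three passes. To raise a cycle one step up, pick any set-theoretic preimage $\tilde w$; because the three surjections are chain maps, $d\tilde w$ lands in the relevant kernel subcomplex --- which is again one of $\Sq^1 C\us$, $\Sq(1,1)C\us$, $\Sq(3,1)C\us$ --- and is a cycle there, null-homologous by the same slope-$3$ vanishing, so subtracting off a primitive completes the pass. Finding a primitive of a null-homologous cycle one level above $\Sq(3,1)C\us$ is itself a problem of exactly this kind: project it one level down, solve there, lift, and correct the error; so the cost of solving at a given level of the filtration doubles the cost one level lower, bottoming out in a single direct solve in $\Sq(3,1)C\us$. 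The three passes then cost $1$, $2$ and $4$ lifting problems in $\Sq(3,1)C\us$, for a total of $7=\dim_{\FF_2}A(1)-1$; collecting the seven primitives found along the way gives the correction terms $x_1,\dots,x_7$, so that $x=x_0+x_1+\dots+x_7$ is a cycle with $\Sq(3,1)x=\Sq(3,1)x_0$.

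The one genuinely fiddly point --- and the only real obstacle --- is the index bookkeeping: one must verify that \emph{every} homology group that arises (the six boundary terms of the three long exact sequences, and the obstruction classes of the seven lifting problems) lies in a bidegree where it both vanishes by the appropriate slope-$3$ line and lies within the range in which the Lemma~\ref{lem:sq1a}-type identification with an $\Ext$ group of a subalgebra is valid. Keeping track of this is exactly what pins down the hypothesis $t>3(s+1)$; everything else is formal.
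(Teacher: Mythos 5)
Your proof is correct and follows essentially the same route as the paper: the same three long exact sequences with boundary terms identified as $\Ext$ groups over $A(0)$, $E$ and $A(1)$, killed by the slope-$3$ vanishing lines exactly when $t>3(s+1)$. Your explicit $1+2+4$ recursive organization of the seven lifting problems is just a spelled-out version of the diagram chase the paper leaves to the reader (and matches its enumeration by the nontrivial Milnor basis elements of $A(1)$).
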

\begin{proof}
To establish the claimed isomorphism one needs to look at the long exact sequences
\newcommand{\subscr}[1]{_{\!\mathmakebox[.1em][l]{#1}}}
\newcommand{\subscrx}[1]{_{\!{#1}}}
\begin{small}
\begin{equation*}
\begin{tikzcd}[column sep=small, cramped, row sep=tiny]
    H\Big(\Sq^{1} C\us\Big)\subscr{s,t} \arrow[r] & 
    H\Big(C\us\Big)\subscr{s,t} \arrow[r] & 
    H\Big(\Sq^{1} C\us\Big)\subscr{s,t+1} \arrow[r] & 
    H\Big(\Sq^{1} C\us\Big)\subscrx{s-1,t} 
    \\
    H\Big(\Sq{(1,1)} C\us\Big)\subscr{s,t+1} \arrow[r] & 
    H\Big(\Sq^{1} C\us\Big)\subscr{s,t+1} \arrow[r] & 
    H\Big(\Sq{(1,1)} C\us\Big)\subscr{s,t+4} \arrow[r] & 
    H\Big(\Sq{(1,1)} C\us\Big)\subscrx{s-1,t+1} 
    \\
    H\Big(\Sq{(3,1)} C\us\Big)\subscr{s,t+4} \arrow[r] & 
    H\Big(\Sq{(1,1)} C\us\Big)\subscr{s,t+4} \arrow[r] & 
    H\Big(\Sq{(3,1)} C\us\Big)\subscr{s,t+6} \arrow[r] & 
    H\Big(\Sq{(3,1)} C\us\Big)\subscrx{s-1,t+4} 
\end{tikzcd}
\end{equation*}
\end{small}%
The terms at the end compute, respectively, $\Ext_{A(0)}^{p,t-1}$, $\Ext_E^{p,t-3}$ and $\Ext_{A(1)}^{p,t-2}$ for $p=s,s-1$.
Using $\Ext_B^{p,q}=0$ for $q>3p$ one finds that they all vanish if $t>3(s+1)$.

The recovery of a cycle $x\in C_{s,t}$ from knowledge of the cycle $\Sq(3,1)x\in \Sq(3,1)C_{s,t}$ is a straightforward
diagram chase that we leave to the reader. It requires the exactness of $\Sq(3,1)C_{s-1,t+6-p}$ for $p=1,\ldots,6$.
These groups relate to $\Ext_{A(1)}^{s-1,t-p}$ which are again zero for $t>3(s+1)$.
\end{proof}
The important points to remember are
\begin{enumerate}
    \item
A vanishing result for various $\Ext_B^{s,\ast}$ and $\Ext_B^{s-1,\ast}$ 
is used to reduce the homology calculation in $C_{s,t}$ 
to a homology calculation in a space of much smaller dimensions.
    \item 
Similar vanishing results for various $\Ext_B^{s-1,\ast}$ are required to use the same reduction to recover the full cycle. 
\item 
The lifting problems for the recovery of the full cycle
are enumerated by the Milnor basis elements $\Sq(R)$ in $B$ and they take
place in degree $t-\vert\Sq(R)\vert$.
\end{enumerate}

To give a more formal account of the algorithm we start with a sub
Hopf algebra\footnote{%
There is an interesting limiting case where $B$ 
is just a sub algebra: this is discussed in Lemma \ref{thm:above2} below.} 
$B\subset A$.
Recall from 
\cite[Ch.~15,~Thm.~6]{MR738973}
that such a $B$ 
is described by a profile function
$p:\NN\rightarrow \NN\cup\{\infty\}$. 
A vector space basis of $B$
is given by those Milnor basis elements $\Sq(R)$ such that $0\le r_j < 2^{p(j)}$.

We will always assume $B$ to be finite, since a computation in a finite dimension
does not see the difference between $B$ and its truncation to some $A(N)$ with $N\gg0$.
Let $\Sig_B = \cnsetof{R}{\Sq(R)\in B}$ and call it the set of \enquote{signatures} in $B$.
We say that $R$ and $S$ have the same $B$-signature if $S_j\equiv R_j\xmod 2^{p(j)}$
for all $j$.
We denote this by $\Sq(R) \simeq_B \Sq(S)$.
For every $S$ there is a unique $R\in\Sig_B$ such that $\Sq(S)\simeq_B\Sq(R)$;
this is called the $B$-signature of $\Sq(S)$ and written as $\sig_B(\Sq(S))$.

Having the same signature defines the \enquote{signature decomposition} 
$$A=\sum\nolimits^\oplus_{R\in\Sig_B} E_RA,\qquad E_RA = \FF_2\cnsetof{\Sq(S)}{\sig_B(S) = R}.$$
We will shortly put an ordering on the signatures $\Sig_B = \{R_0<R_1<\cdots<R_k\}$. 
This allows us to consider the \enquote{signature filtration}
$$F_{R_k}A \subset \cdots \subset F_{R_1}A \subset F_{R_0}A = A$$
with $F_RA = \sum_{S\ge R} E_S(A)$.
We want every $F_RA$ to be a right $A$-submodule of $A$ because we can then 
extend the filtration to our resolution via $F_R C\us = F_RA \otimes_A C\us$.
\begin{lemma}\label{lem:badm}
Let $B\subset A$ be the sub Hopf algebra associated to a profile function
$p$ with $p(i+j)\ge p(i)-j$ for every $i,j\ge 1$. 
Let $\PP$ be the set of the $P_t^s\in B$ and choose an ordering 
$\PP = \{P_{t_1}^{s_1} > \cdots > P_{t_n}^{s_n}\}$
with $P_t^{s}<P_{t'}^{s'}$ whenever $t<t'$.
For $R\in\Sig_B$ 
consider the binary decomposition $r_j = \sum_{t_k=j} 2^{s_k} \varepsilon_k$ of each $r_j$.
Order the signatures via the lexicographic ordering of the bit vector $\tilde R = (\varepsilon_1,\ldots,\varepsilon_n)$.
Then every $F_RA$ is stable under right multiplication by $A$.
Furthermore $E_RA$ (which is a right $A$-module as a quotient of $F_RA$) is up to degree shift by $\vert R\vert$
isomorphic to $B\lefthopfquot A$.
\end{lemma}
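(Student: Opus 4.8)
The plan is to verify the two assertions in turn: first that each $F_RA$ is a right $A$-submodule, and then that the associated graded pieces $E_RA$ are all isomorphic to $B\lefthopfquot A$ up to degree shift. The engine behind both statements is the behaviour of the signature function under right multiplication by the Milnor primitives $P_t^s = \Sq(\ldots,2^s,\ldots)$ (the $2^s$ in position $t$), together with the observation that $A$ is generated as an algebra by these primitives; so it suffices to understand how $\sig_B$ changes when one multiplies $\Sq(S)$ on the right by a single $P_t^s$.

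First I would record the key combinatorial fact: for the admissibility condition $p(i+j)\ge p(i)-j$, the subalgebra $B$ is closed under the relevant operations, and one has an explicit Milnor-multiplication formula $\Sq(S)\cdot P_t^s = \sum \Sq(S')$ where each $S'$ differs from $S$ by moving mass "upward" in the grading — concretely the dominant term is $S' = S + 2^s e_t$ (when the relevant binomial coefficient is odd) and all other terms are strictly smaller in our ordering. The crucial point is that the ordering on $\Sig_B$ was engineered precisely so that the bit-vector $\tilde{R}$ can only increase (lexicographically, reading the $P_t^s$ from low $t$ to high $t$) under this operation: adding $2^s$ into slot $t$ either carries within $r_t$ or overflows into higher slots $r_{t'}$ with $t'>t$, and by the admissibility hypothesis such an overflow lands inside $B$-controlled bits. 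Hence $\sig_B(\Sq(S)\cdot P_t^s)\ge \sig_B(\Sq(S))$ for every generator $P_t^s$, and therefore $F_RA\cdot P_t^s\subseteq F_RA$; iterating over products of generators gives $F_RA\cdot A\subseteq F_RA$. I would present this as the main lemma and then note that stability of the whole filtration is immediate.

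For the second assertion, I would argue that $E_RA = F_RA/F_{R^+}A$ (where $R^+$ is the successor of $R$) is, as a right $A$-module, generated by the single class $\Sq(R)$: indeed any $\Sq(S)$ with $\sig_B(S)=R$ can be written, via the multiplication formula above, as $\Sq(R)$ times a suitable element of $A$ modulo terms of strictly larger signature, which vanish in $E_RA$. This gives a surjection of right $A$-modules $A\twoheadrightarrow E_RA$, $a\mapsto \Sq(R)\cdot a$ (up to the degree shift by $|R|$), and I would identify its kernel as exactly the left ideal generated by $B$ — i.e. the relations are $\Sq(R)\cdot b \equiv 0$ for $b\in B$ of positive degree — which is precisely the description of $B\lefthopfquot A$, the quotient of $A$ by the augmentation ideal of $B$ acting on the left. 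A clean way to see that the kernel is no larger is a dimension count in each degree: summing $\dim (E_RA)_n$ over all $R\in\Sig_B$ must recover $\dim A_n$, while $|\Sig_B|\cdot\dim(B\lefthopfquot A)_{n} \cdot$ (with shifts) does the same by the freeness of $A$ over $B$, forcing the surjection to be an isomorphism in every degree.

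The main obstacle I anticipate is the bookkeeping in the first step: verifying that under the admissibility condition $p(i+j)\ge p(i)-j$ the carries produced by Milnor multiplication genuinely respect the bit-vector ordering, rather than occasionally decreasing it. This is where the precise choice of ordering on $\PP$ (namely $P_t^s<P_{t'}^{s'}$ whenever $t<t'$, with the lexicographic reading of $\tilde R$) has to be matched carefully against the combinatorics of which $\varepsilon_k$ can be flipped. In keeping with the paper's stated policy I would not grind through the general binomial-coefficient bookkeeping but instead remark that the mechanism is exactly the one already visible in the worked cases of Lemmas \ref{lem:sq1a}, \ref{lem:sq1b} and \ref{lem:exa1} — multiplying by $\Sq^1$, then $\Sq(0,1)$, then $\Sq^2$ moves one through the signatures of $A(1)$ in the prescribed order — and refer to \cite{zbMATH02190992} for the full verification.
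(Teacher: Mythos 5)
Your proposal is correct in substance, and its first half coincides with the paper's: the paper isolates exactly your ``key combinatorial fact'' as Lemma \ref{lem:milmult}, splitting Milnor's sum over matrices $X$ into the $B$-trivial ones (which can only add bits to the signature) and the rest (which move a bit into a strictly higher slot and hence, by the ordering requirement $P_t^s<P_{t'}^{s'}$ for $t<t'$, strictly raise the signature). That argument gives stability of $F_RA$ for an arbitrary second factor in one stroke, so your reduction to the generators $P_t^s$ is harmless but unnecessary. Where you genuinely diverge is the identification $E_RA\cong B\lefthopfquot A$: you present $E_RA$ as the cyclic right module $A/\overline{B}A$ via $a\mapsto \Sq(R)\cdot a$ and close with a dimension count using freeness of $A$ over $B$, whereas the paper observes that the induced multiplication on $E_RA$ is computed by enumerating only the $B$-trivial matrices and is therefore \emph{literally the same formula for every} $R$; hence all $E_RA$ are mutually isomorphic, in particular isomorphic to the top piece $\tau_B\cdot A$, which is $B\lefthopfquot A$ by the standard Milnor--Moore argument. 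Your route is viable and arguably more self-contained, but it has one soft spot you should repair: the assertion that $\Sq(S)$ with $\sig_B(S)=R$ equals $\Sq(R)$ times something \emph{modulo terms of strictly larger signature} is not what the multiplication formula gives directly --- the $B$-trivial error terms of $\Sq(R)\cdot\Sq(S')$ can have signature exactly $R$ --- so the surjectivity of $a\mapsto\Sq(R)\cdot a$ onto $E_RA$ requires a triangularity induction over the Milnor basis elements of fixed degree and signature. This surjectivity cannot be skipped: without it (or, dually, without injectivity of $B\lefthopfquot A\rightarrow E_RA$) the dimension count does not close the argument, since a proper quotient of $B\lefthopfquot A$ could map onto a proper submodule of $E_RA$ without disturbing the total dimension. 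Finally, the parenthetical claim that ``all other terms are strictly smaller in our ordering'' contradicts your own correct conclusion two sentences later that the signature can only increase; as written it would destroy the stability of $F_RA$, so it should be deleted or rephrased as referring to a monomial ordering other than the signature ordering.
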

For the proof of the Lemma we should recall Milnor's multiplication algorithm
(see \cite{MR0099653} or \cite[Ch.~15]{MR738973}). This expresses a multiplication
$$\Sq(R)\cdot \Sq(S) = \sum_X \beta_{R,S,X}\cdot Sq(T)$$
as a sum over certain matrices $X=(x_{i,j})$ such that
\begin{enumerate}
    \item the weighted row sums decompose the first factor: $r_i = \sum_j 2^jx_{i,j}$
    \item the column sums decompose the second factor: $s_j = \sum_i x_{i,j}$
    \item the diagonal sums decompose the result: $t_k = \sum_{i+j=k} x_{i,j}$
\end{enumerate}
The coefficient $\beta_{R,S,X}$ is nonzero if and only if the diagonal decomposition of the $t_k$
is bitwise disjoint.
\begin{lemma}\label{lem:milmult}
With the assumptions of Lemma \ref{lem:badm}, call a matrix $X=(x_{i,j})$ $B$-trivial
if $x_{i,j} \equiv 0 \xmod 2^{p(i)-j}$ holds whenever $j\le p(i)$.
Then 
$$\Sq(R)\cdot\Sq(S) = \sum_{B\text{-trivial}\, X} \beta_{R,S,X}\cdot Sq(T) + \text{terms with signature $>R$.}$$
\end{lemma}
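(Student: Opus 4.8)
The plan is to analyze the $B$-signature of each diagonal-sum vector $T$ appearing in Milnor's expansion of $\Sq(R)\cdot\Sq(S)$ and show that the terms whose signature equals $R$ (the smallest possible value in this product, since right multiplication can only preserve or increase the signature filtration by Lemma \ref{lem:badm}) are exactly those coming from $B$-trivial matrices $X$. First I would fix a matrix $X=(x_{i,j})$ contributing a nonzero term, so that $t_k=\sum_{i+j=k}x_{i,j}$ with the $t_k$ bitwise disjoint, and $r_i=\sum_j 2^j x_{i,j}$. The key observation is that $\sig_B(\Sq(T))$ depends only on the residues $t_k \bmod 2^{p(k)}$, and because the diagonal decomposition is bitwise disjoint, $t_k \bmod 2^{p(k)}$ is controlled bit-by-bit by the entries $x_{i,j}$ with $i+j=k$ and $j<p(k)$.

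Next I would compare $\sig_B(\Sq(T))$ with $R$ at the level of the bit vectors $\tilde T$ and $\tilde R$ that define the lexicographic ordering in Lemma \ref{lem:badm}. The entries $x_{i,j}$ split into two groups. When $j\le p(i)$, condition (1), $r_i=\sum_j 2^j x_{i,j}$, shows that $x_{i,j}$ contributes to $r_i$ precisely in the bit range below $2^{p(i)}$, i.e.\ to the ``signature part'' of $r_i$; so these entries are the ones that can change the $B$-signature. When $j>p(i)$ the contribution $2^j x_{i,j}$ lands above $2^{p(i)}$ and is invisible to the signature. The profile condition $p(i+j)\ge p(i)-j$ is exactly what guarantees that an entry $x_{i,j}$ with $j\le p(i)$, which sits on diagonal $k=i+j$, contributes to $t_k$ only within the bit range below $2^{p(i)-j}\le 2^{p(k)}$ — hence only to the signature part of $t_k$ as well. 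Tracking these contributions, I would show that $\sig_B(\Sq(T))=R$ if and only if every entry $x_{i,j}$ with $j\le p(i)$ contributes \emph{nothing} below $2^{p(i)-j}$ to row $i$ and diagonal $i+j$, which by the weighted-row-sum condition forces $x_{i,j}\equiv 0\bmod 2^{p(i)-j}$; that is, $X$ is $B$-trivial. Conversely, if $X$ is $B$-trivial, all signature-changing bits are zero and $\sig_B(\Sq(T))=R$.

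Finally, for the matrices that are not $B$-trivial, the same analysis shows some bit of $\tilde T$ in the signature range exceeds the corresponding bit pattern of $\tilde R$ in the first position where they differ, so $\Sq(T)$ has signature strictly greater than $R$ in the lexicographic order of Lemma \ref{lem:badm}; these are collected into the ``terms with signature $>R$'' on the right-hand side. The main obstacle I anticipate is the bookkeeping around carries: the weighted sum $r_i=\sum_j 2^j x_{i,j}$ need not be a bitwise-disjoint decomposition (unlike the diagonal sums), so an entry $x_{i,j}$ can, after carrying, influence bits of $r_i$ above $2^{p(i)}$ even when some of its own bits lie below $2^{p(i)-j}$. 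Handling this cleanly requires being careful about what ``contributes to the signature part'' means — I would argue at the level of the truncation $r_i \bmod 2^{p(i)}$ directly, using that reduction mod $2^{p(i)}$ is additive, rather than bit-by-bit, to sidestep the carry issue; the profile inequality then does the rest of the work on the diagonal side where, fortunately, bitwise disjointness is available.
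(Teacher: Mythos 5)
Your core argument is the same one the paper uses: a nonzero residue of $x_{i,j}$ modulo $2^{p(i)-j}$ (for $j\le p(i)$) corresponds to a signature bit of $r_i$, and the profile inequality $p(i+j)\ge p(i)-j$ forces that bit to reappear inside the signature range of $t_{i+j}$; since the ordering of Lemma \ref{lem:badm} makes $P_{i+j}^{\ast}$ strictly more significant than $P_{i}^{\ast}$, such a matrix raises the signature in the lexicographic order. Your remark about carries in the row sums is a real subtlety that the paper's one-line bit-chase also glosses over, and working with $r_i$ modulo $2^{p(i)}$ additively is a sensible way to handle it. One point you assert but do not isolate: to get the \emph{strict} lexicographic inequality you must check that at the most significant position where $\tilde T$ and $\tilde R$ differ it is $T$ that carries the bit; this holds because every signature bit that leaves $r_i$ reappears at a strictly more significant position, while all other changes only add bits.

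Two steps need repair. First, the parenthetical appeal to Lemma \ref{lem:badm} (``right multiplication can only preserve or increase the signature'') is circular: in the paper that lemma is deduced \emph{from} the present one. Second, your biconditional fails in the direction you call ``conversely'': a $B$-trivial matrix need not give $\sig_B(\Sq(T))=\sig_B(\Sq(R))$, because the row-zero entries $x_{0,k}$, which decompose the second factor $\Sq(S)$ and land directly on diagonal $k$, can contribute bits below $2^{p(k)}$ to $t_k$. For example, with $B=A(0)$ every matrix for $\Sq(2)\cdot\Sq(1)=\Sq(3)+\Sq(0,1)$ is $B$-trivial, yet the term $\Sq(3)$ has $A(0)$-signature $(1)$ while $\Sq(2)$ has signature $(0)$. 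The paper accordingly only claims $\sig_B(R)\le\sig_B(T)$ for $B$-trivial $X$. Neither flaw is fatal to the lemma as stated, since the first sum runs over \emph{all} $B$-trivial matrices irrespective of the signature of their output, so the only implication actually required is ``not $B$-trivial $\Rightarrow$ signature $>R$,'' which your final paragraph argues directly; but the incorrect intermediate claim and the circular citation should be removed.
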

\begin{proof}
By design, a $B$-trivial $X$ will have $\sig_B(R) \le \sig_B(T)$:
from $0\equiv x_{i,j}$ $\xmod$ $2^{p(i)-j}$ and $p(i)-j\ge p(i+j)$ one finds
$t_k\equiv r_k + x_{0,k}\xmod 2^{p(k)}$ and this sum must be disjoint.
Hence the multiplication with $B$-trivial $X$ can only add non-zero bits to $R$ which cannot lower the 
signature.

If $X$ is not $B$-trivial one needs to chase the possible movements of a bit $2^k\in r_j$.
This can only be removed from $r_j$ by moving to the right in $x_{j,\ast}$, hence affecting a
bit in $t_k$ with $k>j$. Since we required $P_k^\ast>P_j^\ast$ that bit is more significant
than the original $2^k\in r_j$ and the signature is increased.\end{proof}

The proof of Lemma \ref{lem:badm} is now immediate. 
Note that to compute a right multiplication 
$$E_RA \xrightarrow{x\mapsto x \cdot \Sq(S)} E_RA$$
it suffices to enumerate the $B$-trivial matrices $X$;
the multiplication is therefore insensitive to the $B$-signature of the first factor.
Hence all $E_RA$ are isomorphic to $\tau_B\cdot A \cong B\lefthopfquot A$
where $\tau_B = \Sq(R\textsubscript{max})$ is the largest dimensional element of $B$.

We will from now on only consider sub Hopf algebras $B$ as in Lemma \ref{lem:badm}
with their compatible signature ordering; such $B$ will be called admissible.

We now have a signature filtration $F_RC\us$ on the partial resolution.
Note that $H(E_RC\us)_{p,q} \cong H(B\lefthopfquot A \otimes_A C\us)_{p,q}$ 
approximates 
$\Tor^A_{p,q-\vert\Sq(R)\vert}(B\lefthopfquot A)$ 
which is dual to  $\Ext^{p,q-\vert R\vert}_B(k)$.
Hence a vanishing result for $\Ext_B(k)$ will translate
to a corresponding exactness assertion for $E_RC\us$.
With these preparations the proposed new algorithm can then be formalized as in Algorithm \ref{alg:new} (see page \pageref{alg:new}).

As explained earlier, the algorithm is not automatically applicable everywhere:
it only works and produces valid results when the bidegrees
$(s,t-\vert R\vert)$, $(s-1,t-\vert R\vert)$ for various $R\in\Sig_B$ are contained in a 
known vanishing region for the cohomology of the subalgebra $B$.
There are easily determined vanishing regions for the $E_2$-term of the May 
spectral sequence for $\Ext_B$ that we can use.
To state them let $Q_s=P_{s+1}^0$ denote the usual Bockstein operation. 
\begin{lemma}\label{bvanishing}
Let $B\subset A$ be a finite sub Hopf algebra and let $s\textsubscript{min}$, $s\textsubscript{max}$
be the smallest, resp.~largest $s$ with $Q_s\in B$. Then $\Ext_B^{s,t}(k)=0$ if either
$t<s\cdot\vert Q_{s\textsubscript{min}}\vert$ 
or 
$t>s\cdot\vert Q_{s\textsubscript{max}}\vert$.
\end{lemma}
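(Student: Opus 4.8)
The plan is to bound $\Ext_B(k)$ from above by the $E_2$-page of the May spectral sequence for $B$ and then do elementary degree counting there. Recall that for the sub Hopf algebra $B$ with profile function $p$ this spectral sequence has $E_2$-page the polynomial algebra $P=\FF_2[\,h_{i,j}:i\ge1,\ 0\le j<p(i)\,]$, the generator $h_{i,j}$ being present exactly when $P_i^j\in B$ and lying in homological degree $1$ and internal degree $|P_i^j|=2^j(2^i-1)$. The May differentials raise homological degree by $1$ and preserve the internal degree, so in each fixed bidegree $(s,t)$ the term $E_\infty^{s,t}$ is a subquotient of $P^{s,t}$; hence $\Ext_B^{s,t}(k)$ vanishes whenever $P^{s,t}$ does, and it suffices to prove the two stated vanishing ranges with $\Ext_B^{s,t}(k)$ replaced by $P^{s,t}$.

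Now $P^{s,t}$ is spanned by the monomials $h_{i_1,j_1}\cdots h_{i_s,j_s}$ of homological degree $s$ (repetitions allowed), for which $t=\sum_{\ell=1}^{s}2^{j_\ell}(2^{i_\ell}-1)$ is a sum of $s$ of the numbers $2^j(2^i-1)$. So if $\mu_-$ and $\mu_+$ denote the smallest and largest value of $2^j(2^i-1)$ over the pairs $(i,j)$ with $P_i^j\in B$, then every nonzero $P^{s,t}$ satisfies $s\mu_-\le t\le s\mu_+$, and the lemma is reduced to the identities $\mu_-=|Q_{s_{\min}}|$ and $\mu_+=|Q_{s_{\max}}|$. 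Since $Q_s=P_{s+1}^0\in B$ is the same as $p(s+1)\ge1$, the numbers $s_{\min}+1$ and $s_{\max}+1$ are the smallest and largest $n$ with $p(n)\ge1$; also $|Q_s|=2^{s+1}-1$ is increasing in $s$. For the lower bound, any generator has degree $2^j(2^i-1)\ge 2^i-1=|Q_{i-1}|\ge|Q_{s_{\min}}|$, with equality attained by $h_{s_{\min}+1,0}$, whence $\mu_-=|Q_{s_{\min}}|$.

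The upper bound is the only step that uses more than bookkeeping, namely that $B$ is admissible, so $p(i+j)\ge p(i)-j$ for all $i,j\ge1$ (our standing hypothesis, cf.\ Lemma~\ref{lem:badm}). For a generator $h_{i,j}$ we have $0\le j\le p(i)-1$, and I claim $i+p(i)-1\le s_{\max}+1$: if $p(i)=1$ this says $i\le s_{\max}+1$, which holds because $p(i)\ge1$; if $p(i)\ge2$, applying $p(a+b)\ge p(a)-b$ to $(a,b)=\bigl(i,\,p(i)-1\bigr)$ (with $b\ge1$) gives $p\bigl(i+p(i)-1\bigr)\ge1$, so $i+p(i)-1$ is once more an index with profile value $\ge1$, hence $\le s_{\max}+1$. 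Therefore $i+j\le i+p(i)-1\le s_{\max}+1$, and since $2^j\ge1$,
\[
  2^j(2^i-1)=2^{i+j}-2^j\ \le\ 2^{\,s_{\max}+1}-1\ =\ |Q_{s_{\max}}|,
\]
with equality for $h_{s_{\max}+1,0}$; this gives $\mu_+=|Q_{s_{\max}}|$ and finishes the proof. The formal input about the May spectral sequence and the degree arithmetic are routine; the single point of genuine content is recognizing that the upper bound one needs is exactly the elementary inequality $i+p(i)-1\le s_{\max}+1$, which admissibility delivers at once. (One tacitly assumes $B\ne k$, so that $s_{\min}$, $s_{\max}$ are defined; otherwise there is nothing to prove.)
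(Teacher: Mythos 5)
Your proof is correct and follows exactly the route the paper intends (the paper states this lemma without proof, remarking only that these are vanishing regions visible already on the May spectral sequence): bound $\Ext_B^{s,t}$ by the polynomial algebra $\FF_2[h_{i,j}]$ on the generators $P_i^j\in B$ and count internal degrees of monomials. Your side remark that the upper bound genuinely requires the standing admissibility hypothesis $p(i+j)\ge p(i)-j$ of Lemma~\ref{lem:badm} --- the bare phrase \enquote{finite sub Hopf algebra} in the statement is not sufficient, as profile functions like $(0,0,5,0,0,2,0,\dots)$ show --- is accurate and a worthwhile refinement.
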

One can thus choose between working above the vanishing line based on $s\textsubscript{min}$
or below the vanishing line for $s\textsubscript{max}$.
We discuss the merits of these choices in the next sections.

\begin{center}
    \begin{algorithm}\label{alg:new}
        \SetAlgoLined
        \DontPrintSemicolon
        \KwIn{A partial resolution below bidegree $(s,t)$}
        \KwIn{An admissible subalgebra $B\subset A$ with its ordering of $\Sig_B$}
        \KwResult{An extension of the resolution to $(s,t)$}
        \LineComment*[h]{Start by computing the homology of $E_0C\us$}\;
        $M \longleftarrow$ matrix of $d:E_0C_{s,t}\rightarrow E_0C_{s-1,t}$\;
        $K \longleftarrow$ basis of kernel of $M$\;
        $N \longleftarrow$ matrix of $d:E_0C_{s+1,t}\rightarrow E_0C_{s,t}$\;
        $Q \longleftarrow$ basis of quotient $K/\im N$, i.e.~of $H(E_0C\us)_{s,t}$\;
        \If{$Q$ not empty}{
            \LineComment*[h]{Set up approximate boundaries for the new generators}\;
            \For{$q_i\in Q$}{
                $x_i \longleftarrow$ a representative of $q_i$ in $E_0C_{s,t}$
                \Comment*[r]{will become $d(g_i)$}
                $d_i \longleftarrow d(x_i)$  
                \Comment*[r]{represents $d^2(g_i)$, should be zero at the end}
            }
            \LineComment*[h]{Extract error terms $e_i$ from $d^2(g_i)$ 
            and compute corrections to $d(g_i)$}\;
            \For{$R\in\Sig_B,\,R\not=0$ (process these in order)}{
                $M \longleftarrow$ matrix of $d:E_RC_{s,t}\rightarrow E_RC_{s-1,t}$\;
                \For{$q_i\in Q$}{
                    $e_i \longleftarrow$ extract the summands of $d_i$ from $E_RC_{s-1,t}$\;
                    $f_i \longleftarrow$ solution of $M\cdot f_i = e_i$\;
                    $x_i \longleftarrow x_i - f_i$\;
                    $d_i \longleftarrow d_i - d(f_i)$\;
                }
            }
            \LineComment*[h]{Introduce new generators}\;
            \For{$q_i\in Q$}{
                Verify that $d_i = 0$\;
                Introduce new generator $g_i\in C_{s+1,t}$ with $dg_i = x_i$.
            }
        }
        \caption{The new algorithm}
    \end{algorithm}
\end{center}

\section{Working below the vanishing line}
Suppose $B\subset A(n)$ and let $\tau_B$ be the maxmimum 
of the dimensions of the $\Sq(R)$ in $B$. 
With $\rho_n=\vert Q_{n+1}\vert = 2^{n+1}-1$ one has
\begin{thm}\label{thm:below}
    Suppose $B\subset A(n)$ and $t>\rho_n\cdot (s+1) + \tau_B$.
    Then algorithm \ref{alg:new} is applicable.
\end{thm}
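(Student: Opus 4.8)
The plan is to verify the applicability criterion isolated at the end of Section~\ref{sec:algbasic}: Algorithm~\ref{alg:new} runs to completion and returns the correct extension as soon as, for every $R\in\Sig_B$ with $R\neq0$, the two bidegrees $(s,t-\vert R\vert)$ and $(s-1,t-\vert R\vert)$ fall inside a region where $\Ext_B(k)$ vanishes. Let me recall why these are the pertinent groups, since that determines which bidegrees one must control. For $R\neq0$ the homology $H(E_RC\us)_{p,q}$ is dual to $\Ext_B^{p,q-\vert R\vert}(k)$, and at the bidegrees we need --- where $q=t$, so that $q-\vert R\vert=t-\vert R\vert<t$ --- this is an honest identification rather than the mere approximation of Section~\ref{sec:algbasic}: we are then strictly below the internal degree $t$ up to which $C\us$ already behaves like a genuine $B$-free resolution of $k$, so the still-unfinished part of $C\us$, which lives in internal degrees $\ge t$, cannot intrude. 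Vanishing of these $\Ext_B$ groups makes the spectral sequence of the signature filtration degenerate enough in the relevant spot that $H(C\us)_{s,t}\cong H(E_0C\us)_{s,t}$ --- so the homology that Algorithm~\ref{alg:new} computes on the much smaller complex $E_0C\us$ is exactly the one to be killed --- and that every lifting problem $M\cdot f_i=e_i$ posed in $E_RC\us$ is solvable (the error term $e_i$ is automatically a cycle there, hence a boundary once that group vanishes), which is precisely what allows the approximate boundaries $x_i$ to be completed to honest cycles and the concluding check $d_i=0$ to succeed.

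Granting this, the theorem is a short estimate. By Lemma~\ref{bvanishing}, $\Ext_B^{s',t'}(k)=0$ whenever $t'>s'\cdot\vert Q_{s\textsubscript{max}}\vert$, where $s\textsubscript{max}$ is the largest index with $Q_{s\textsubscript{max}}\in B$. Since $B\subset A(n)$ we have $s\textsubscript{max}\le n$, so the Milnor primitives of $B$ are among $Q_0,\dots,Q_n$ and $\vert Q_{s\textsubscript{max}}\vert\le 2^{n+1}-1=\rho_n$. Moreover $\vert R\vert\le\tau_B$ for every $R\in\Sig_B$, by the very definition of $\tau_B$. Therefore, for $s'\in\{s-1,s\}$ and any $R\in\Sig_B$,
\[
  t-\vert R\vert \;\ge\; t-\tau_B \;>\; \rho_n\cdot(s+1) \;\ge\; \rho_n\cdot s' \;\ge\; s'\cdot\vert Q_{s\textsubscript{max}}\vert ,
\]
so that $\Ext_B^{s',t-\vert R\vert}(k)=0$. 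Thus every bidegree demanded by the criterion lies in the vanishing region, and Algorithm~\ref{alg:new} is applicable.

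The one point that deserves real care --- and which I would want to pin down before calling the proof complete --- is the exactness claim of the first paragraph: that for $R\neq0$ the identification of $H(E_RC\us)_{p,q}$ with the dual of $\Ext_B^{p,q-\vert R\vert}(k)$ is genuine at the bidegrees used, i.e. that no contribution from the incomplete range of $C\us$, and no differential of the spectral sequence computing $H(E_RC\us)$, spoils it. This rests entirely on $\vert R\vert\ge1$, which forces $t-\vert R\vert<t$; it is also the reason the criterion is imposed only for $R\neq0$ (for $R=0$ one has $\vert R\vert=0$ and $H(E_0C\us)_{s,t}$ genuinely carries extra structure --- it is the group the algorithm is computing, not one required to vanish). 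Everything else is bookkeeping. Note finally that the stated bound is comfortably, not tightly, sufficient: the displayed chain already closes with $\rho_n\cdot s+\tau_B$ in place of $\rho_n\cdot(s+1)+\tau_B$, and the slightly more generous form has the convenient feature of covering the next homological degree $(s+1,t)$ as well, so that the same choice of $B$ keeps working as $s$ grows at the fixed internal degree $t$.
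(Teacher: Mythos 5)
Your argument is correct and is essentially the paper's own: the paper's proof is a two-line deferral to the thesis (``a straightforward diagram chase along the lines of Lemma \ref{lem:sq1a} and \ref{lem:sq1b}''), and what you supply is precisely that chase --- reduce applicability to the vanishing of $\Ext_B^{s',t-\vert R\vert}(k)$ for $s'\in\{s-1,s\}$ and $R\in\Sig_B$, then feed the hypothesis $t>\rho_n(s+1)+\tau_B$ into Lemma \ref{bvanishing}. Your closing observation that the chain already closes with $\rho_n\cdot s+\tau_B$ is also consistent with the special case of Lemma \ref{lem:sq1a} (where $B=A(0)$ needs only $t-s>1$, not $t-s>2$), so the paper's bound is indeed merely a comfortable sufficient condition.
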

\begin{proof}
This is a straightforward diagram chase along the lines of
Lemma \ref{lem:sq1a} and \ref{lem:sq1b}.
Details can be found in \cite{zbMATH02190992}, Satz~2.2.15.
\end{proof}
For a fixed bidegree $(s,t)$ this criterion
yields a finite number of choices for applicable subalgebras $B$.
Choosing the best one then amounts to finding that $B$ for which the
dimensions of the $E_RC\us$ over all $\Sig_B$ becomes smallest.

In practice we have just evaluated the size of the first piece $E_0C\us$
to make the choice of $B$; experience shows that the initial homology
calculation using $E_0C\us$ is 
facing larger matrices than the
subsequent lifting problems.
In our actual implementation we have only implemented a  simplified 
search for $B$ using an ordering of the $P_t^s\in A$, first via $t+s$,
then by $s$. We then considered only those $B$ that are spanned by an initial
segment of the $P_t^s$, i.e.~ $A(0)$, $E(\Sq^1,\Sq(0,1))$, $A(1)$,
$A(1)\cdot E(\Sq(0,0,1))$, etc. In practice this seems to give a sufficiently
good choice.

We illustrate the working of the algorithm with some statistics
from the computation for $p=2$ in topological dimension $120$.
Algorithm \ref{alg:new} is applicable there with $B=A(2)$.
The dimensions of the vector spaces for $s=10,\ldots,14$ are
given in the following table.
\begin{center}
\begin{tabular}{c|ccc|ccc}
  $s$ &  
  $C_{s-1}$ & $C_s$ & $C_{s+1}$ &  
  $E_0C_{s+1}$ & $E_0C_s$ & $E_0C_{s-1}$ \\
  \hline
  10 & 29087 & 23997 & 19609 & 482 & 586 & 706 \\
  11 & 25029 & 20477 & 18385 & 447 & 494 & 604 \\
  12 & 21388 & 19213 & 18325 & 455 & 474 & 537 \\
  13 & 20070 & 19156 & 16601 & 396 & 457 & 489 \\
  14 & 19993 & 17350 & 14437 & 364 & 436 & 494 
\end{tabular}
\end{center}
The dimensions of the matrices that are encountered in the
subsequent lifting problems can be seen in
Figure \ref{fig:decompb}.

\begin{figure}
    \begin{center}
\begin{tikzpicture}
    \begin{axis}[%
        xlabel={number of rows}, ylabel={number of columns},%
        scatter/classes={%
        hom10.0.260={mark=square*,  scale=.5,blue},%
        moh10.0.260={mark=square*,  scale=.5,blue},%
        lft10.0.260={mark=triangle*,scale=.5,blue},%
        hom11.0.262={mark=square*,  scale=.5,red},%
        moh11.0.262={mark=square*,  scale=.5,red},%
        lft11.0.262={mark=triangle*,scale=.5,red},%
        hom12.0.264={mark=square*,  scale=.5,olive},%
        moh12.0.264={mark=square*,  scale=.5,olive},%
        lft12.0.264={mark=triangle*,scale=.5,olive},%
        hom13.0.266={mark=square*,  scale=.5,purple},%
        moh13.0.266={mark=square*,  scale=.5,purple},%
        lft13.0.266={mark=triangle*,scale=.5,purple},%
        hom14.0.268={mark=square*,  scale=.5,magenta},%
        moh14.0.268={mark=square*,  scale=.5,magenta},%
        lft14.0.268={mark=triangle*,scale=.5,magenta},%
            c={mark=o,draw=black}}]
            \addplot[scatter,only marks, scatter src=explicit symbolic] table[meta=label] {log.10.0.120};
            \addplot[scatter,only marks, scatter src=explicit symbolic] table[meta=label] {log.11.0.120};
            \addplot[scatter,only marks, scatter src=explicit symbolic] table[meta=label] {log.12.0.120};
            \addplot[scatter,only marks, scatter src=explicit symbolic] table[meta=label] {log.13.0.120};
            \addplot[scatter,only marks, scatter src=explicit symbolic] table[meta=label] {log.14.0.120};
        \end{axis}
\end{tikzpicture}
\end{center}
\caption{Application of Algorithm \ref{alg:new} to the computation of 
$C_s$ for $s=10,\ldots,14$
and $t-s=120$ with $B=A(2)$ (the corresponding colors are blue, red, olive, purple, magenta). 
The squares represent 
the matrices of the initial homology computation; the triangles correspond to the lifting problems.}
\label{fig:decompb}
\end{figure}
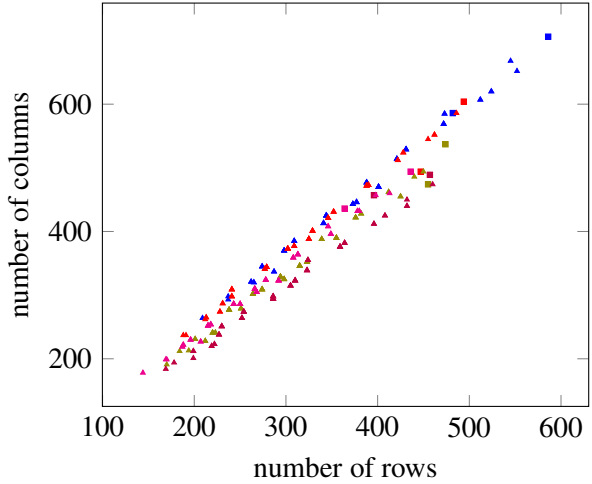

\section{Working above the vanishing line}
The subalgebras that were chosen in the last section 
become bigger as $(x,y)=(t-s,s)$ approaches the $x$-axis; 
conversely, when $s$ gets bigger they tend to become smaller,
hence less useful. For these cases a different choice of $B$
presents itself:
let $F(n)=\FF_2\cnsetof{\Sq(R)}{r_1=\cdots=r_n=0}$.
The smallest Bockstein in $F(n)$ is $Q_{n+1}$, hence Lemma \ref{bvanishing}
gives
\begin{thm}\label{thm:above}
Suppose $B\subset F(n)$ and $t<(2^{n+1}-1)s$.
Then algorithm \ref{alg:new} is applicable.
\end{thm}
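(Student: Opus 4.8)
The plan is to follow the proof of Theorem~\ref{thm:below} --- and through it Lemmas~\ref{lem:sq1a}, \ref{lem:sq1b} and \ref{lem:exa1} --- essentially verbatim, swapping the \enquote{large internal degree} vanishing region used there for the one belonging to $F(n)$. As noted just before the statement, the smallest Bockstein of $F(n)$ has dimension $\rho:=2^{n+1}-1$, so Lemma~\ref{bvanishing} yields $\Ext_B^{p,q}(k)=0$ whenever $q<\rho p$ for any (admissible) $B\subset F(n)$. One then runs the long exact homology sequences of the filtration steps $F_{R'}C\us\hookrightarrow F_RC\us\twoheadrightarrow E_RC\us$, uses them to identify $H(C\us)_{s,t}$ with the group $H(E_0C\us)_{s,t}$ that Algorithm~\ref{alg:new} actually computes, and checks that this vanishing line kills all the boundary terms and all the obstructions in the recovery loop.

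Concretely, recall (as in the discussion following Lemma~\ref{bvanishing}) that Algorithm~\ref{alg:new} is applicable at $(s,t)$ as soon as $\Ext_B^{s,\,t-\vert R\vert}(k)=0$ and $\Ext_B^{s-1,\,t-\vert R\vert}(k)=0$ for every non-zero $R\in\Sig_B$: the homological degree $s$ groups allow one to replace $H(C\us)_{s,t}$ by $H(E_0C\us)_{s,t}$ in the filtration sequences, while the homological degree $s-1$ groups are precisely the obstructions to solving the successive lifting problems $M\cdot f_i=e_i$ that rebuild the genuine cycle $x_i\in C_{s,t}$, problems which live in $E_RC\us$ in internal degree $t-\vert R\vert$. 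For $R\neq 0$ these groups are genuinely computed by $E_RC\us$, because $E_RC\us$ is $B\lefthopfquot A\otimes_AC\us$ up to a shift by $\vert R\vert$ and every non-zero element of $F(n)$ has dimension $\ge\rho$, so the bidegrees in question have internal degree $\le t-\rho<t$, where the partial resolution is already complete. It then remains only to plug in the hypothesis: from $t<\rho s$ one gets $t-\vert R\vert\le t<\rho s$ for the homological degree $s$ groups and $t-\vert R\vert\le t-\rho<\rho s-\rho=\rho(s-1)$ for the homological degree $s-1$ groups, so Lemma~\ref{bvanishing} applies throughout. Assembling the long exact sequences --- equivalently, noting that the $E_1$-term of the signature filtration spectral sequence vanishes in the relevant bidegrees --- gives the isomorphism $H(C\us)_{s,t}\cong H(E_0C\us)_{s,t}$ together with the solvability of every lifting step, so that the verification \enquote{$d_i=0$} at the end of Algorithm~\ref{alg:new} is guaranteed.

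The single delicate point --- and the reason the bound reads exactly $t<(2^{n+1}-1)s$, with no extra slack subtracted --- is that the unit of homological degree lost when passing from $s$ to $s-1$ along the filtration sequences is exactly compensated by the $\vert R\vert\ge\rho$ units of internal degree gained from the shift on $E_RC\us$; this works precisely because the smallest non-zero Milnor element of $F(n)$ has dimension $\rho=2^{n+1}-1$. Making this compensation explicit, together with the bookkeeping of which bidegrees appear in the filtration spectral sequence, is the only real work; everything else is the same elementary diagram chase as in Lemmas~\ref{lem:sq1a}, \ref{lem:sq1b} and \ref{lem:exa1}, and may be carried out just as in \cite{zbMATH02190992}.
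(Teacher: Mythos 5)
Your proof is correct and follows the same route the paper intends: the paper itself only says "straightforward diagram chase along the lines of Lemmas \ref{lem:sq1a} and \ref{lem:sq1b}" and defers details to Satz~2.2.19 of \cite{zbMATH02190992}. Your write-up supplies exactly the missing arithmetic — that $t<\rho s$ with $\rho=2^{n+1}-1$ forces $\Ext_B^{s,t-\vert R\vert}=0$ and $\Ext_B^{s-1,t-\vert R\vert}=0$ for all non-zero $R\in\Sig_B$ because $\vert R\vert\ge\rho$ — so nothing needs to be changed.
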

\begin{proof}
    This is again a straightforward diagram chase along the lines of
    Lemma \ref{lem:sq1a} and \ref{lem:sq1b}.
    For details see \cite{zbMATH02190992}, Satz~2.2.19.
\end{proof}
% Note that there is no extra offset to worry about when 
% the vanishing region above the $\Ext$ chart is used: all $(s,t-\vert\Sq(R)\vert)$
% will remain in the vanishing region as soon as $(s,t)$ does.

To see that this gives a powerful choice of $B$ consider the problem of 
computing the zeroes above the Adams vanishing region, i.e.~confirming mechanically
Adams' theorem that $\Ext_A^{s,t}=0$ if $0<t-s<2s-3$ (see  \cite{MR194486}).
The naive algorithm would have to verify the exactness of 
$$\begin{tikzcd}[column sep=1cm]
    Ah_0^{s-1} 
    & Ah_0^s \arrow[l, "{\cdot\Sq(1)}"']
    & Ah_0^{s+1} \arrow[l, "{\cdot\Sq(1)}"']
\end{tikzcd}$$
in degree $t-s$; if $t-s=200$ these vector spaces have dimension $\approx 15000$
which makes the computation of kernels and images challenging.
Theorem \ref{thm:above} allows to pick $B=F(1)$ in this region, so 
the homology calculation in Algorithm \ref{alg:new} will 
instead look at the sequence
\begin{equation}\label{eq:fqex}
\begin{tikzcd}[column sep=1cm]
    F(1)\lefthopfquot Ah_0^{s-1} 
    & F(1)\lefthopfquot Ah_0^s \arrow[l, "{\cdot\Sq(1)}"']
    & F(1)\lefthopfquot Ah_0^{s+1} \arrow[l, "{\cdot\Sq(1)}"']
\end{tikzcd}\end{equation}
Here $F(1)\lefthopfquot A\cong\FF_2\cnsetof{\Sq^k}{k\ge 0}$ is just one-dimensional
in every degree so the computation is pretty trivial
(and in fact independent of the topological dimension $t-s$).

We illustrate the effect of using $B=F(2)$ in the following table
which lists the dimensions of the vector spaces for $s=35$ and $t-s=114,\ldots,118$.
\begin{center}
\begin{tabular}{c|ccc|ccc}
    $t-s$ &  
    $C_{34}$ & $C_{35}$ & $C_{36}$ & $E_0C_{36}$ & $E_0C_{35}$ & $E_0C_{34}$ \\
    \hline
    114 & 3187 & 2683 & 2471 & 235 & 255 & 322 \\
    115 & 3345 & 2817 & 2587 & 251 & 273 & 342 \\
    116 & 3501 & 2946 & 2712 & 261 & 283 & 356 \\
    117 & 3666 & 3075 & 2829 & 269 & 295 & 368 \\
    118 & 3844 & 3225 & 2961 & 286 & 314 & 390
\end{tabular}
\end{center}
The corresponding Figure \ref{fig:decompa1} also shows the dimension
of the matrices in the lifting problems. The figure shows that the decomposition
of the $C\us$ into the $E_RC\us$ is not as uniform as in Figure \ref{fig:decompb}.

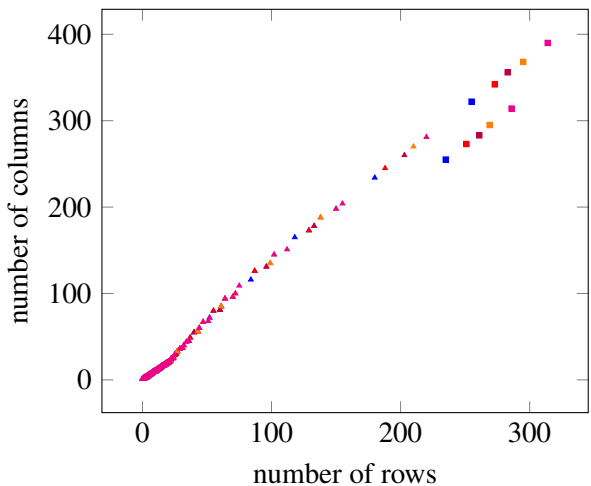
\begin{figure}
    \begin{center}
\begin{tikzpicture}
    \begin{axis}[%
        xlabel={number of rows}, ylabel={number of columns},%
        scatter/classes={%
        hom35.0.298={mark=square*,  scale=.5,blue},%
        moh35.0.298={mark=square*,  scale=.5,blue},%
        lft35.0.298={mark=triangle*,scale=.5,blue},%
        hom35.0.300={mark=square*,  scale=.5,red},%
        moh35.0.300={mark=square*,  scale=.5,red},%
        lft35.0.300={mark=triangle*,scale=.5,red},%
        hom35.0.302={mark=square*,  scale=.5,purple},%
        moh35.0.302={mark=square*,  scale=.5,purple},%
        lft35.0.302={mark=triangle*,scale=.5,purple},%
        hom35.0.304={mark=square*,  scale=.5,orange},%
        moh35.0.304={mark=square*,  scale=.5,orange},%
        lft35.0.304={mark=triangle*,scale=.5,orange},%
        hom35.0.306={mark=square*,  scale=.5,magenta},%
        moh35.0.306={mark=square*,  scale=.5,magenta},%
        lft35.0.306={mark=triangle*,scale=.5,magenta},%
            c={mark=o,draw=black}}]
            \addplot[scatter,only marks, scatter src=explicit symbolic] table[meta=label] {log.35.0.114};
            \addplot[scatter,only marks, scatter src=explicit symbolic] table[meta=label] {log.35.0.115};
            \addplot[scatter,only marks, scatter src=explicit symbolic] table[meta=label] {log.35.0.116};
            \addplot[scatter,only marks, scatter src=explicit symbolic] table[meta=label] {log.35.0.117};
            \addplot[scatter,only marks, scatter src=explicit symbolic] table[meta=label] {log.35.0.118};
        \end{axis}
\end{tikzpicture}
\end{center}
\caption{Application of Algorithm \ref{alg:new} to the computation of 
$C_s$ for $s=35$
and $t-s=114,\ldots,118$ with $B=F(2)$ (the corresponding colors are blue, red, purple, orange, magenta). 
The squares again represent 
the matrices of the initial homology computation, the triangles the lifting problems.}
\label{fig:decompa1}
\end{figure}

There is a small but interesting extension of this result for $p=2$. Let
$$F'(n) = \FF_2\cnsetof{Sq(R)}{r_1=\cdots=r_{n-1}=0,\, r_{n}\equiv 0\xmod 2} \subset F(n-1)$$
Although this is just a subalgebra of $A$, not a sub Hopf algebra, our theory is nonetheless
applicable:
\begin{thm}\label{thm:above2}
Suppose $B\subset F'(n)$ and $t<(2^{n+1}-2)s$.
Then algorithm \ref{alg:new} is applicable.
\end{thm}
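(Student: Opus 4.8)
The plan is to run the same argument that establishes Theorems \ref{thm:below} and \ref{thm:above}, the only genuinely new points being \emph{(a)} that the signature formalism of Lemmas \ref{lem:badm} and \ref{lem:milmult} survives the passage from a sub Hopf algebra to the mere subalgebra $F'(n)$, and \emph{(b)} that $F'(n)$ enjoys a sharper $\Ext$ vanishing line than the sub Hopf algebra $F(n-1)\supset F'(n)$ that would otherwise control it. I would therefore not reprove the diagram chase in detail but organise the argument around these two inputs.

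First I would set up the signature machinery for a finite $B\subset F'(n)$. The defining condition of $F'(n)$ --- that $r_1=\dots=r_{n-1}=0$ and that the bottom bit of $r_n$ vanish --- is not of the shape cut out by a profile function, so Lemma \ref{lem:badm} does not literally apply; but all that Algorithm \ref{alg:new} uses is its conclusion. So I would keep $\Sig_B=\cnsetof{R}{\Sq(R)\in B}$ and the $B$-signature as defined, note that the generator removed from $F(n-1)$ to form $F'(n)$ is $P^0_n=Q_{n-1}$, and place $P^0_n$ at the very bottom of the chosen ordering $\PP$ of the $P^s_t\in B$. With the induced lexicographic ordering on $\Sig_B$ one then checks that Milnor's multiplication rule still cannot lower the $B$-signature: a term $\Sq(T)$ coming from a $B$-trivial matrix keeps $\sig_B(T)\ge\sig_B(R)$ exactly as in the proof of Lemma \ref{lem:milmult}, while a non-$B$-trivial matrix can only push an offending bit of some $r_j$ --- including, in the one new case, a bit that would land in the bottom slot of position $n$ --- into a strictly more significant slot, raising the signature. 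Hence every $F_RA$ is a right $A$-submodule, every $E_RA\cong\tau_B\cdot A$ up to the degree shift $\vert R\vert$, and the signature filtration $F_RC\us=F_RA\otimes_AC\us$ on the partial resolution behaves formally just as in the Hopf case.

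Next I would record the vanishing input, replacing the appeal to Lemma \ref{bvanishing} (which is stated only for sub Hopf algebras). Its May spectral sequence proof, however, still applies to the finite truncation of $F'(n)$: the $E_1$-term is polynomial on one generator for each $P^s_t\in F'(n)$, and --- precisely because $P^0_n=Q_{n-1}$ of degree $2^n-1$ has been removed --- the lowest-degree such generator is $P^1_n=\Sq(0,\dots,0,2,0,\dots)$, of degree $2(2^n-1)=2^{n+1}-2$. The usual slope estimate for the May $E_1$-term then gives $\Ext^{s,t}_{B}(k)=0$ whenever $t<(2^{n+1}-2)s$, and hence, through the duality $H(E_RC\us)_{p,q}\approx\Ext^{p,q-\vert R\vert}_{B}(k)$, exactness of $E_RC\us$ in every bidegree $(p,q)$ with $q-\vert R\vert<(2^{n+1}-2)p$. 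The decisive bookkeeping point is that every nonzero $R\in\Sig_B\subset\Sig_{F'(n)}$ already satisfies $r_1=\dots=r_{n-1}=0$ with $r_n$ even, so $\vert R\vert\ge\vert P^1_n\vert=2^{n+1}-2$; this is exactly what pins the hypothesis at $t<(2^{n+1}-2)s$.

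Finally I would assemble the diagram chase as in Lemmas \ref{lem:sq1a}--\ref{lem:exa1}: iterating the short exact sequences $F_{R'}C\us\hookrightarrow F_RC\us\twoheadrightarrow E_RC\us$ over consecutive signatures $R<R'$ and feeding in the exactness above shows $H(C\us)_{s,t}\cong H(E_0C\us)_{s,t}$ and that any cycle in $E_0C_{s,t}$ lifts to a cycle in $C_{s,t}$ through the sequence of lifting problems performed by Algorithm \ref{alg:new}. Concretely, the homology reduction needs $\Ext^{s,t-\vert R\vert}_B(k)=0$ for $R\neq0$, which holds since $t<(2^{n+1}-2)s$ and $\vert R\vert\ge0$; the recovery of the full cycle needs $\Ext^{s-1,t-\vert R\vert}_B(k)=0$ for $R\neq0$, which holds because then $\vert R\vert\ge2^{n+1}-2$ and so $t-\vert R\vert<(2^{n+1}-2)s-(2^{n+1}-2)=(2^{n+1}-2)(s-1)$. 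I expect the first step to be the main obstacle: one must verify that the proofs of Lemmas \ref{lem:badm} and \ref{lem:milmult} really do go through with $F'(n)$ in place of a sub Hopf algebra, the single new phenomenon being the behaviour of the bottom bit of $r_n$ under Milnor multiplication, which has to be controlled by the disjointness condition on the diagonal sums together with the placement of $P^0_n$ at the bottom of the ordering. Everything else is the routine diagram chase already used for Theorems \ref{thm:below} and \ref{thm:above}, and the details are in \cite{zbMATH02190992}.
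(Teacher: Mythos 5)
The paper does not actually prove this theorem --- the ``proof'' is a citation to Satz~2.2.22 of \cite{zbMATH02190992} --- so there is nothing in the text to match your outline against; but its architecture (extend the signature filtration to $F'(n)$, feed a vanishing line of slope $2^{n+1}-2$ into the same diagram chase as for Theorems \ref{thm:below} and \ref{thm:above}) is certainly the intended one, and your bookkeeping $\vert R\vert\ge\vert P^1_n\vert=2^{n+1}-2$ for $R\ne0$ is exactly what pins down the stated hypothesis. Your point \emph{(a)} is also essentially right: the set of slots $\cnsetof{P^s_t}{t\ge n}\setminus\{P^0_n\}$ is closed under the ``move right'' operation $(s,t)\mapsto(s-m,t+m)$ that governs Milnor multiplication, so the excluded slot $P^0_n$ can never absorb a $B$-bit and the argument of Lemma \ref{lem:milmult} goes through. (A minor quibble: you speak of placing $P^0_n$ at the bottom of the ordering of $\PP$, but $P^0_n\notin B$ and hence $P^0_n\notin\PP$; what you mean is that this slot is simply omitted from the signature, which is the correct thing to do.)

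The genuine soft spot is point \emph{(b)}. You justify the vanishing line by ``the May spectral sequence of the finite truncation of $F'(n)$'', but for $p=2$ the span $F'(n)$ is not closed under multiplication in $A$ --- for instance $\Sq(2)\cdot\Sq(2)=\Sq(1,1)\notin F'(1)$ --- so $\Ext_{F'(n)}(k)$ and its May spectral sequence are not defined, and neither is the change-of-rings identification $H(E_RC\us)\approx\Ext_B^{\ast,\ast-\vert R\vert}(k)$ on which the Hopf-algebra case rests. (This is why the paper calls $F'(n)$ an ``interesting limiting case''; its own assertion that $F'(n)$ is a subalgebra is already too generous at $p=2$.) What actually has to be proved is the vanishing of $H(E_0C\us)_{p,q}$ for $q<(2^{n+1}-2)p$, where in the relevant range $E_0A\cong\FF_2\cnsetof{\Sq(s_1,\dots,s_{n-1},\varepsilon)}{\varepsilon\in\{0,1\}}$. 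This module is an extension of two shifted copies of $F(n-1)\lefthopfquot A$, so the needed input is that the connecting map --- multiplication by $q_{n-1}=[Q_{n-1}]\in\Ext^{1,2^n-1}_{F(n-1)}(k)$ --- is an isomorphism above the line of slope $2^{n+1}-2$. That can be read off the May $E_1$-term of the honest sub Hopf algebra $F(n-1)$, where $h_{0,n}$-multiplication is injective with cokernel generated in internal degrees $\ge 2^{n+1}-2$; equivalently, one can work with the associated graded $E^0F'(n)$, which \emph{is} a sub Hopf algebra of $E^0A$. Your slope and your conclusion are correct, but this intermediate step is needed to make the vanishing input legitimate; as written, your appeal to $\Ext_{F'(n)}$ rests on an object that does not exist.
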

\begin{proof}
See \cite{zbMATH02190992}, Satz~2.2.22.
\end{proof}
This theorem is best understood by comparison with the odd-primary situation. Recall that 
for $p>2$ the Steenrod algebra has generators $P(R)$ that resemble the $Sq(2R)$,
and separate Bockstein operators $Q(\varepsilon)$ where $\varepsilon=(\varepsilon_0,\varepsilon_1,\ldots)$
with $\varepsilon_j\in\{0,1\}$. The analogues of $F(n)$ and $F'(n)$ are
\begin{align*}
    F(n) &= \FF_p\cnsetof{Q(\varepsilon)P(R)}{r_1=\cdots=r_n=0,\,\varepsilon_0=\cdots=\varepsilon_{n-1}=0}, \\
    F'(n) &= \FF_p\cnsetof{Q(\varepsilon)P(R)}{r_1=\cdots=r_n=0,\,\varepsilon_0=\cdots=\varepsilon_{n}=0}.
\end{align*}
For $p>2$ both $F(n)$ and $F'(n)$ are sub Hopf algebras for which the analogue of Theorem \ref{thm:above} applies.

Using $F'(n)$ can give a considerable speedup. Consider again the reduction (\ref{eq:fqex})
that was applicable above the $v_1$-line of slope $1/2$.
Theorem \ref{thm:above2} allows to replace $F(1)$ by $F'(1)$
if the bidegree lies above the $h_1$-line of slope $1$.
Since $F'(1)\lefthopfquot A \cong A(0)$ this reduces the verification of
$\Ext_A^{s,t}=0$ for $t-s>1$ and $t<s$ to the empty computation!

Similarly, using $F'(2)$ is justified above a line of slope $1/5$ which eventually covers
the entire $v_1$-periodic region. To verify the exactness of the resolution in that region
the algorithm only looks at 
$$F'(2)\lefthopfquot A \cong \FF_2\cnsetof{Sq(r,\varepsilon)}{r\ge 0,\,\varepsilon=0,1}$$
which has at most $2$ generators in every degree.

\section{Lifting problems}
It is important to realize that the signature filtration does not just allow the
computation of a resolution; it also facillitates the computation {\em with} it.
The central problem here is usually to solve lifting problems: a cycle $z\in C_{s,t}$
is given and the task is to find some $w\in C_{s+1,t}$ with $d(w)=z$. 
This process is the core, for example, of the computation of a chain map when the target is $C\us$.
The signature filtration can be used to decompose this computation in exactly the same
way as during the computation of the resolution (see Algorithm \ref{alg:lift} for a formalization).

The author's experience seems to suggest that the computation of the matrices
$E_0C_{s,t}\rightarrow E_0C_{s-1,t}$ takes considerably more time than the linear algebra
routines. This suggests that it might be wise to already 
cache these matrices during the computation of the resolution.

\begin{center}
    \begin{algorithm}\label{alg:lift}
        \SetAlgoLined
        \DontPrintSemicolon
        \KwIn{A resolution up to bidegree $(s,t)$}
        \KwIn{An admissible subalgebra $B\subset A$ with its ordering of $\Sig_B$}
        \KwIn{A cycle $z\in C_{s,t}$}
        \KwResult{A preimage $w\in C_{s+1,t}$ with $d(w)=z$}
        $w \longleftarrow 0$\;
        \LineComment*[h]{Extract error terms $e$ from $z$ and compute correction to $w$}\;
        \For{$R\in\Sig_B$ (process these in order)}{
            $M \longleftarrow$ matrix of $d:E_RC_{s+1,t}\rightarrow E_RC_{s,t}$\;
            $e \longleftarrow$ extract the summands of $z$ from $E_RC_{s,t}$\;
            $f \longleftarrow$ solution of $M\cdot f = e$\;
            $w \longleftarrow w + f$\;
            $z \longleftarrow z - d(f)$\;
        }
        \caption{Computing the lift of a cycle}
    \end{algorithm}
\end{center}

\section{Implementations}
The author implemented the algorithm for $p=2$ as part of his PhD thesis. 
That implementation was written in C. 
He was then able to compute the resolution up to $t-s=210$ using a 
machine with a 300 MHz AMD K6-2 processor;
the computation took 108 days.
The running time per dimension seemed to double every 10 dimensions.
In dimension 181 the author decided to rewrite part of the multiplication routine
using the SSE2 instruction set; this essentially doubled the speed of the program.
However, given the exponential growth of the computational challenge this
was effectively only good enough to buy ten more stems.
More information about running times, 
memory usage, etc. can be found in \cite[Fig.~2.12-2.15]{zbMATH02190992}.

In 2004 the author started a new project \cite{steenrodlib} which also works
for (small) odd primes and comes equipped with an interface to the Tcl programming 
language. The tables in this paper were computed using that library. 
The library is the computational engine behind the author's Yacop/Sage project
which strives to create an experimental Steenrod algebra cohomology 
package for the Sage computer algebra system.

\section{Loose ends}
We close this paper with a few remarks.

Firstly, our algorithm seems to be a strong argument in favour of the Milnor basis
of the Steenrod algebra as the right basis for mechanical cohomology calculations. 
Our signature filtration seems to be difficult to handle in the Serre-Cartan basis
of admissible monomials, for example. 
Furthermore, Lemma \ref{lem:milmult} shows how to compute the induced differential 
on the $E_RC\us$ directly. The naive approach, i.e.~first computing the differential in $F_RC\us$
and then reducing modulo the $F_SC\us$ with $S>R$, would be a lot more wasteful.

Secondly, the algorithm can also be used to compute a 
minimal \enquote{complex motivic resolution} $M\us$
of the Steenrod algebra (see \cite{MR2629898} for an introduction to this topic).
The reason is that such an $M\us$ is just a resolution of the ordinary Steenrod 
algebra that is equipped with an extra \enquote{Bockstein filtration}.
One begins by computing a minimal resolution $K\us$ of the trigraded algebra $EA$
which is 
the \enquote{odd primary Steenrod algebra for $p=2$}.
Our theory is directly applicable to this computation.
In a second pass one then interprets $EA$ as an associated graded 
of the ordinary Steenrod algebra.
Using the same generators as in $K\us$ and lifting the terms of the 
differentials in $K\us$ arbitrarily from $EA$ to $A$ 
defines an approximate $M\us$ which however fails to satisfy $d^2(g)=0$.
One then computes successively correction terms to the $d(g)$ that remedy this.
The correction process procedes along the third (Bockstein) grading of $EA$
and requires at each step to solve a lifting problem in $K\us$
for which our theory is again applicable.

Furthermore, the signature filtration can also be used to resolve modules
other than the ground field. To find out which $B$ is applicable at a given
bidegree some external knowledge of $\Ext_B(M)$ is necessary.
This might be interesting for modules that are free over some $A(n)$, for example.
The author has not pursued this, though, since his preferred approach to 
the computation of $\Ext_A(M)$ uses the resolution $M\land C\us$ 
of $M$ where $C\us$ is the ground field resolution. At least for small $M$
that makes the computation of a separate resolution for each $M$ unnecessary.

Finally, a very interesting open question is the applicability of our shortcut to
the computation of unstable cohomology charts.

\bibhere 

\end{document}